\newtheorem{theorem}{Theorem}[section]
\theoremstyle{definition}
\newtheorem{proposition}[theorem]{Proposition}
\newtheorem{corollary}[theorem]{Corollary}
\theoremstyle{remark}
\numberwithin{equation}{section}
\begin{document}

\title[  Inequalities for eigenvalues of fourth order elliptic operators ]{ Inequalities for eigenvalues of fourth order elliptic operators in divergence form on Riemannian manifolds }

\author{Shahroud Azami}
\address{Department of Mathematics, Faculty of Sciences, Imam Khomeini International University, Qazvin, Iran. }

\email{azami@sci.ikiu.ac.ir}



\subjclass[2010]{ 35P15; 35J93, 53C42}



\keywords{Eigenvalue, Elliptic opeartor, Immersion.}
\begin{abstract}
In this paper, we study eigenvalue of  linear fourth order elliptic operators in divergence form  with  Dirichlet boundary condition on  a bounded domain in a compact Riemannian manifolds with boundary (possibly empty) and find  a general inequality for them. As an application, by using this inequality, we study eigenvalues of this operator on compact domains  of complete submanifolds in  a Euclidean space.
\end{abstract}

\maketitle
\section{Introduction}
In this paper, let $(M, \langle,\rangle)$  be an $n $-dimensional complete Riemannian manifold and let $\Omega \subset M$ be  a bounded connected domain with smooth boundary  $\partial \Omega$ in $M$. Denote by $\Delta$ the Beltrami-Laplace operator on $M$. The study of the  spectrum of  geometric operator is an important  topic and many works  have been done in this area.  The clamped plate problem or the Dirichlet biharmonic operator  for a connected bounded domain  $\Omega \subset \mathbb{R}^{n}$ is given by
\begin {equation}\label{1}
\begin{cases}
\Delta^{2} u=\lambda u,& \text{in}\,\,\, \Omega\\
u=\frac{\partial u}{\partial \nu}=0&\text{on}\,\,\,\partial \Omega,
\end{cases}
\end{equation}
where $\nu$ is the outward unit normal vector field of  $\partial \Omega$. Suppose that $\{\lambda_{i}\}_{i=1}^{\infty}$ is eigenvalues of the problem (\ref{1}). Payn  et'al proved in paper \cite{P} that
\begin{equation*}
\lambda_{k+1}-\lambda_{k}\leq \frac{8(n+2)}{n^{2}}\frac{1}{k}\sum_{i=1}^{k}\lambda_{i},\,\,\,\,\,\,\,\,k=1,2,....
\end{equation*}
In 1984, Hile and Yeh \cite{HY} generalized and showed
\begin{equation*}
\sum_{i=1}^{k}\frac{\lambda_{i}^{\frac{1}{2}}}{\lambda_{k+1}-\lambda_{i}}\geq \frac{n^{2}k^{\frac{3}{2}}}{8(n+2)}\left(\sum_{i=1}^{k}\lambda_{i}\right)^{-\frac{1}{2}},\,\,\,\,\,\,\,\,k=1,2,....
\end{equation*}
In 1990, Hook \cite{H}obtained the following inequality
\begin{equation*}
 \frac{n^{2}k^{2}}{8(n+2)}\leq\left(\sum_{i=1}^{k}\lambda_{i}^{\frac{1}{2}}\right)
\left(\sum_{i=1}^{k}\frac{\lambda_{i}^{\frac{1}{2}}}{\lambda_{k+1}-\lambda_{i}}\right),\,\,\,\,\,\,\,\,k=1,2,....
\end{equation*}
In 2006, Cheng and Yang \cite{C} obtained  the  inequality
\begin{equation*}
\lambda_{k+1}-\frac{1}{k}\sum_{i=1}^{k}\lambda_{i}\leq
\left(\frac{8(n+2)}{n^{2}}\right)^{\frac{1}{2}}\frac{1}{k}
\sum_{i=1}^{k}\left(\lambda_{i}(\lambda_{k+1}-\lambda_{i})\right)^{\frac{1}{2}}
,\,\,\,\,\,\,\,\,k=1,2,....
\end{equation*}
In 2007, Wang and Xia \cite{W}, proved  universal bounds for eigenvalues of  the biharmonic operator on Riemannian manifolds, for instance they showed that, when $\Omega$ is a compact domain in $\mathbb{R}^{n}$, then
\begin{equation*}
\lambda_{k+1}\leq \frac{1}{k}\sum_{i=1}^{k}\lambda_{i}+\left\{
\frac{64}{n^{2}k^{2}}(\sum_{i=1}^{k}\lambda_{i}^{\frac{1}{2}})(\sum_{i=1}^{k}\lambda_{i}^{\frac{3}{2}})
-\frac{1}{k}\sum_{i=1}^{k}(\lambda_{i}-\frac{1}{k}\sum_{i=1}^{k}\lambda_{i})^{2}
\right\}^{\frac{1}{2}},\,\,k=1,2,....
\end{equation*}
The aim of the present work is to study the  eigenvalues of linear fourth  order elliptic operator  in divergence form on Riemannian manifolds. In special case, this operator is the biharmonic operator. We prove some general inequalities for them. By using  these  inequalities, we obtain, when $\Omega$ is a compact  domains of complete submanifolds in a  Euclidean space.\\

Let $T$  be symmetric positive definite $(1,1)$-tensor on $M$ and  $\Omega \subset M$ be a compact  domain with  smooth boundary $\partial \Omega$ in $M$. We will studying  the eigenvalue problem
\begin {equation}\label{2}
\begin{cases}
\mathcal{L}^{2} u=\lambda u,& \text{in}\,\,\, \Omega\\
u=\frac{\partial u}{\partial \nu}=0&\text{on}\,\,\,\partial \Omega,
\end{cases}
\end{equation}
where $\mathcal{L} u=div(T(\nabla u))$ and $\nabla$ is the gradient operator of $M$. If $T$ be the identity tensor the $\mathcal{L}=\Delta$.
The main results of this paper are  as follow\\

\begin{theorem}\label{t1} Let $\Omega$ be a domain in an $n$-dimensional complete Riemannian manifold  $(M, \langle,\rangle)$ isometrically  immersed in $\mathbb{R}^{m}$, $\lambda_{i}$ be  the $i$th eigenvalue of  (\ref{2}) and $u_{i}$  be the corresponding orthonormal real-valued eigenfunction, that is
\begin {equation}\label{3}
\begin{cases}
\mathcal{L}^{2} u_{i}=\lambda_{i} u_{i},& \text{in}\,\,\, \Omega\\
u_{i}=\frac{\partial u_{i}}{\partial \nu}=0,&\text{on}\,\,\,\partial \Omega\\
\int_{\Omega}u_{i}u_{j}dm=\delta_{ij}&\forall i,j =1,2,....
\end{cases}
\end{equation}
Then for any positive constant $\delta$ and any positive integer $k$, we have
\begin{equation}\label{4}
\sum_{i=1}^{k}(\lambda_{k+1}-\lambda_{i})^{2}\int_{\Omega}u_{i}^{2}tr(T)dm\leq \delta \sum_{i=1}^{k}(\lambda_{k+1}-\lambda_{i})^{2}A_{i}+\frac{1}{\delta}
\sum_{i=1}^{k}(\lambda_{k+1}-\lambda_{i})B_{i}
\end{equation}where
\begin{eqnarray*}
A_{i}&=&2\int_{\Omega}u_{i}\mathcal{L}u_{i}\langle tr(\alpha\circ T)+tr(\nabla T), I\rangle dm+2\int_{\Omega}u_{i}\langle T(\nabla \mathcal{L}u_{i}), I\rangle dm\\&&+\int_{\Omega}u_{i}^{2}(||tr(\alpha\circ T)||^{2}+|tr(\nabla T)|^{2})dm+4\int_{\Omega}u_{i}\langle T(\nabla u_{i}), tr(\nabla T)\rangle dm\\&&+4\int_{\Omega}|T(\nabla u_{i})|^{2}dm+2\int_{\Omega}\mathcal{L}u_{i}\langle T(\nabla u_{i}),I\rangle dm,
\end{eqnarray*}
and
\begin{equation*}
B_{i}=\int_{\Omega}\left\{ |T(\nabla u_{i})|^{2}+u_{i}\langle T(\nabla u_{i}), tr(\nabla T)\rangle+\frac{u_{i}^{2}}{4}(||tr(\alpha\circ T)||^{2}+|tr(\nabla T)|^{2})\right\}dm,
\end{equation*}
where $I(x)=(x_{1},...,x_{m})$ for any $x=(x_{1},...,x_{m})\in\mathbb{R}^{n}$, $||f||^{2}=\int_{\Omega}f^{2}dm$,  $dm$ is the volume form on $\Omega$, $\alpha$ is the fundamental form of $M$ and $\alpha \circ T=\alpha(T(.),.)$.
\end{theorem}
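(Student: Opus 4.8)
The plan is to run the standard Rayleigh--Ritz / commutator argument of Payne--P\'olya--Weinberger type, adapted to the fourth order operator $\mathcal{L}^2$ and the extrinsic geometry coming from the isometric immersion $M\hookrightarrow\mathbb{R}^m$. First I would fix the coordinate functions $I_a$, $a=1,\dots,m$, of the immersion and, for each $i\le k$, introduce the trial functions
\begin{equation*}
\varphi_{ia}=I_a u_i-\sum_{j=1}^{k}c_{ija}u_j,\qquad c_{ija}=\int_\Omega I_a u_i u_j\,dm,
\end{equation*}
which are designed so that $\varphi_{ia}\perp u_j$ in $L^2(\Omega)$ for all $j\le k$ and so that $\varphi_{ia}$ still satisfies the Dirichlet data $\varphi_{ia}=\partial_\nu\varphi_{ia}=0$ on $\partial\Omega$ (here one uses $u_i=\partial_\nu u_i=0$). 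The min-max characterization of $\lambda_{k+1}$ for the selfadjoint problem \eqref{2} then gives
\begin{equation*}
\lambda_{k+1}\int_\Omega \varphi_{ia}^2\,dm\le \int_\Omega \varphi_{ia}\,\mathcal{L}^2\varphi_{ia}\,dm=\int_\Omega \big(\mathcal L\varphi_{ia}\big)\big(\mathcal L\varphi_{ia}\big)\,dm,
\end{equation*}
after integrating by parts twice (the boundary terms vanish). Summing over $a$ and over $i$ and rearranging will produce the two sides of \eqref{4}.

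The computational heart of the argument is to expand $\mathcal{L}(I_a u_i)$ and $\int_\Omega \varphi_{ia}\mathcal{L}^2\varphi_{ia}$. The key Bochner-type identity is the commutator formula for $\mathcal{L}=\mathrm{div}(T\nabla\,\cdot\,)$ acting on a product: one computes
\begin{equation*}
\mathcal{L}(I_a u_i)=I_a\,\mathcal{L}u_i+2\langle T(\nabla u_i),\nabla I_a\rangle+u_i\,\mathcal{L}I_a,
\end{equation*}
and then must control $\mathcal{L}I_a$ and $\nabla I_a$ using the immersion. Here $\sum_a \nabla I_a\otimes\nabla I_a$ recovers the metric, $\sum_a|\nabla I_a|^2=n$, and $\sum_a (\mathcal L I_a)\nabla I_a$, $\sum_a(\mathcal L I_a)^2$ are expressed via the mean-curvature-type quantities $\mathrm{tr}(\alpha\circ T)$ and the derivative term $\mathrm{tr}(\nabla T)$; this is exactly where the terms $\|\mathrm{tr}(\alpha\circ T)\|^2+|\mathrm{tr}(\nabla T)|^2$ and $\langle T(\nabla u_i),\mathrm{tr}(\nabla T)\rangle$ in $A_i$ and $B_i$ originate. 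After substituting, $\sum_a\int_\Omega(\mathcal L\varphi_{ia})^2\,dm$ splits into a "diagonal" piece involving $\lambda_i$, cross terms that telescope against the normalization $\int u_iu_j=\delta_{ij}$, and the geometric remainder that becomes $\sum_i(\lambda_{k+1}-\lambda_i)B_i$-type data; simultaneously $\sum_a\int_\Omega\varphi_{ia}^2\,dm=\int_\Omega u_i^2\,\mathrm{tr}(T)\,dm-\sum_j c_{ija}^2$, giving the left side of \eqref{4} up to the subtracted squares, which are nonnegative and can be discarded.

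To get the factor $(\lambda_{k+1}-\lambda_i)$ rather than just $\lambda_{k+1}$, I would, as in Wang--Xia, also test the eigenvalue equation against $\mathcal L(\varphi_{ia})$ or equivalently use the auxiliary identity pairing $\psi_{ia}:=\langle T(\nabla u_i),\nabla I_a\rangle+\tfrac12 u_i\mathcal L I_a$ against $u_i$ and against $\mathcal L u_i$, producing the quantity $\sum_a\int_\Omega u_i\,\mathcal L u_i(\cdots)$ and $\int_\Omega\mathcal L u_i\langle T(\nabla u_i),I\rangle$ appearing in $A_i$. Then the crucial step is the Cauchy--Schwarz / Young inequality with parameter: writing the resulting bilinear inequality in the schematic form
\begin{equation*}
\sum_{i=1}^k(\lambda_{k+1}-\lambda_i)^2 X_i\le \sum_{i=1}^k(\lambda_{k+1}-\lambda_i)^2 A_i^{1/2}\Big((\lambda_{k+1}-\lambda_i)A_i^{?}\Big)^{?}
\end{equation*}
and applying $2xy\le \delta x^2+\delta^{-1}y^2$ with the free constant $\delta>0$ to separate the $A_i$ and $B_i$ contributions, one arrives precisely at \eqref{4}. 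The main obstacle I anticipate is bookkeeping: correctly tracking all the boundary integrations by parts (checking each vanishes because of the clamped conditions), and correctly assembling the numerous geometric terms so that the sums over the immersion index $a$ collapse to the compact tensorial expressions $A_i$ and $B_i$ — in particular verifying that the "extra" terms like $4\int_\Omega|T(\nabla u_i)|^2$ and $2\int_\Omega\mathcal L u_i\langle T(\nabla u_i),I\rangle$ come out with exactly the stated coefficients. The analytic core (min-max plus Young's inequality) is routine once the algebra is organized.
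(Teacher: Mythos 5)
Your strategy is the same as the paper's: the paper first proves a general proposition for an arbitrary test function $h$ (trial functions $\phi_i=hu_i-\sum_{j\le k}a_{ij}u_j$, Rayleigh--Ritz for $\lambda_{k+1}$, the antisymmetric matrix $b_{ij}$, the relation $r_{ij}=(\lambda_j-\lambda_i)a_{ij}$, and Young's inequality with the free parameter $\delta$), and then specializes $h$ to the coordinate functions $x_r$ of the immersion and sums over $r$, using exactly the identities you cite ($\sum_r T(\nabla x_r,\nabla x_r)=tr(T)$, $\sum_r(\mathcal{L}x_r)^2=\|tr(\alpha\circ T)\|^2+|tr(\nabla T)|^2$, $\sum_r T(\nabla x_r,\nabla u_i)^2=|T(\nabla u_i)|^2$, etc.). So working directly with the $I_a$ instead of an abstract $h$ is only a cosmetic difference.

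There is, however, a concrete error in your bookkeeping that, as written, breaks the derivation of the left-hand side of the theorem. You claim $\sum_a\int_\Omega\varphi_{ia}^2\,dm=\int_\Omega u_i^2\,tr(T)\,dm-\sum_{j,a}c_{ija}^2$; in fact, since $\sum_a I_a^2=|I|^2$, this sum equals $\int_\Omega u_i^2\,|I|^2\,dm-\sum_{j,a}c_{ija}^2$, an extrinsic quantity that never appears in the statement. The factor $\int_\Omega u_i^2\,tr(T)\,dm$ does not come from the $L^2$-norm of the trial functions at all: it arises from the cross pairing of $\varphi_{ia}$ with $\psi_{ia}=T(\nabla I_a,\nabla u_i)+\tfrac12u_i\mathcal{L}I_a$, because integration by parts gives $-2\int_\Omega I_au_i\,\psi_{ia}\,dm=\int_\Omega u_i^2\,T(\nabla I_a,\nabla I_a)\,dm$, which sums over $a$ to $\int_\Omega u_i^2\,tr(T)\,dm$ (this is the identity (\ref{22}) in the paper). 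Moreover the subtracted squares cannot simply be ``discarded'': in the Rayleigh--Ritz inequality $\lambda_{k+1}\|\varphi_{ia}\|^2\le\int\varphi_{ia}\mathcal{L}^2\varphi_{ia}\,dm$ dropping $-\sum_j c_{ija}^2$ goes the wrong way; in the correct argument these terms are cancelled against the cross terms by exploiting $a_{ij}=a_{ji}$, $b_{ij}=-b_{ji}$ and $r_{ij}=(\lambda_j-\lambda_i)a_{ij}$ when summing over $i,j$, and only after that cancellation is Young's inequality $-2xy\le\delta x^2+\delta^{-1}y^2$ applied to separate the $A_i$-part (which is $\sum_a\int I_au_ip_{ia}\,dm$) from the $B_i$-part (which is $\sum_a\|\psi_{ia}\|^2$). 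You do invoke the $\psi_{ia}$-pairing later as the device for the $(\lambda_{k+1}-\lambda_i)$ weights, so the plan is repairable, but the step producing $\int_\Omega u_i^2\,tr(T)\,dm$ and the treatment of the projection terms must be redone along these lines.
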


\begin{theorem}\label{t2} Let $\Omega$ be a domain in an $n$-dimensional complete Riemannian manifold  $(M, \langle,\rangle)$ isometrically  immersed in $\mathbb{R}^{m}$, $\lambda_{i}$ be  the $i$th eigenvalue of  (\ref{2}) and $u_{i}$  be the corresponding orthonormal real-valued eigenfunction.
Then for any positive constant $\delta$ and any positive integer $k$, we have
\begin{equation}\label{4}
\sum_{i=1}^{k}(\lambda_{k+1}-\lambda_{i})^{2}\int_{\Omega}u_{i}^{2}tr(T)dm\leq \delta \sum_{i=1}^{k}(\lambda_{k+1}-\lambda_{i})^{2}C_{i}+\frac{1}{\delta}
\sum_{i=1}^{k}(\lambda_{k+1}-\lambda_{i})D_{i}
\end{equation}where
\begin{eqnarray*}
C_{i}&=&2(\sqrt{m-n} S_{0}T_{*}+T_{0})I_{0}\lambda_{i}^{\frac{1}{2}}+I_{0}||T(\nabla\mathcal{L}u_{i})||_{L^{2}(\Omega)}+ (m-n)S_{0}^{2}T_{*}^{2}+T_{0}^{2}\\&&+4 T_{0}||T(\nabla u_{i})||_{L^{2}(\Omega)}+4 ||T(\nabla u_{i})||_{L^{2}(\Omega)}+2\lambda_{i}
 I_{0}||T(\nabla u_{i})||_{L^{2}(\Omega)}
\end{eqnarray*}
and
\begin{equation*}
D_{i}=||T(\nabla u_{i})||_{L^{2}(\Omega)}+T_{0}||T(\nabla u_{i})||_{L^{2}(\Omega)}+\frac{1}{4}((m-n)S_{0}^{2}T_{*}^{2}+T_{0}^{2})
\end{equation*}
where $S_{0}=\max\{\sup_{\bar{\Omega}}|S_{e_{k}}|: \,\,k=n+1,...,m\}$, $S_{e_{k}}$ is the Weingarten operator of the immersion with respect to $e_{k}$, $T_{*}=\sup_{\bar{\Omega}} |T|$, $T_{0}=\sup_{\bar{\Omega}}|tr(\nabla T)|$ and $I_{0}=\sup_{\bar{\Omega}}|I|$.
\end{theorem}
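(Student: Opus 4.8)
The strategy is to obtain Theorem~\ref{t2} as an immediate consequence of Theorem~\ref{t1}: it suffices to bound each of the quantities $A_i$ and $B_i$ occurring in the inequality of Theorem~\ref{t1} from above by $C_i$ and $D_i$ respectively, using only the global geometric constants $S_0,T_*,T_0,I_0$, and then to substitute these bounds back in. As a preliminary I would record the two normalizations that will be used repeatedly. Since $\{u_i\}$ is $L^2(\Omega)$-orthonormal, $\|u_i\|_{L^2(\Omega)}=1$; and integrating by parts in (\ref{3}), where the boundary terms vanish because $u_i=0$ and $\nabla u_i=0$ on $\partial\Omega$, one gets
\begin{equation*}
\|\mathcal{L}u_i\|_{L^2(\Omega)}^2=\int_\Omega u_i\,\mathcal{L}^2u_i\,dm=\lambda_i\int_\Omega u_i^2\,dm=\lambda_i,
\end{equation*}
so that $\|\mathcal{L}u_i\|_{L^2(\Omega)}=\lambda_i^{1/2}$.

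Next I would establish the relevant pointwise bounds on $\bar\Omega$. By definition $|tr(\nabla T)|\le T_0$ and $|I|\le I_0$. For the second fundamental form term, expanding $tr(\alpha\circ T)$ against an orthonormal normal frame $e_{n+1},\dots,e_m$ of the immersion, so that $tr(\alpha\circ T)=\sum_{k=n+1}^{m}tr(S_{e_k}\circ T)\,e_k$, and using $|S_{e_k}|\le S_0$ together with $|T|\le T_*$, one obtains $|tr(\alpha\circ T)|\le\sqrt{m-n}\,S_0T_*$, hence also $|tr(\alpha\circ T)|^2\le(m-n)S_0^2T_*^2$.

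With these in hand I would estimate the six integrals defining $A_i$, and the three defining $B_i$, one at a time: in each case first pulling out the geometric constants via the pointwise bounds above, and then applying the Cauchy--Schwarz inequality on $L^2(\Omega)$ together with $\|u_i\|_{L^2(\Omega)}=1$ and $\|\mathcal{L}u_i\|_{L^2(\Omega)}=\lambda_i^{1/2}$. For example,
\begin{equation*}
2\int_\Omega u_i\,\mathcal{L}u_i\,\langle tr(\alpha\circ T)+tr(\nabla T),I\rangle\,dm\le 2\bigl(\sqrt{m-n}\,S_0T_*+T_0\bigr)I_0\,\lambda_i^{1/2},
\end{equation*}
while $2\int_\Omega u_i\langle T(\nabla\mathcal{L}u_i),I\rangle\,dm\le 2I_0\|T(\nabla\mathcal{L}u_i)\|_{L^2(\Omega)}$, the curvature term $\int_\Omega u_i^2(\|tr(\alpha\circ T)\|^2+|tr(\nabla T)|^2)\,dm\le(m-n)S_0^2T_*^2+T_0^2$, and the remaining integrals involving $\int_\Omega u_i\langle T(\nabla u_i),tr(\nabla T)\rangle$, $\int_\Omega|T(\nabla u_i)|^2$, and $\int_\Omega\mathcal{L}u_i\langle T(\nabla u_i),I\rangle$ are handled in exactly the same fashion. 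Adding the resulting bounds gives $A_i\le C_i$, and the identical treatment of the three terms of $B_i$ gives $B_i\le D_i$.

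Finally, since $i\le k$ forces $\lambda_{k+1}-\lambda_i\ge 0$, the coefficients $(\lambda_{k+1}-\lambda_i)^2$ and $(\lambda_{k+1}-\lambda_i)$ in (\ref{4}) are nonnegative, so replacing $A_i$ by $C_i$ and $B_i$ by $D_i$ only enlarges the right-hand side of the inequality of Theorem~\ref{t1}; this yields precisely the inequality asserted in Theorem~\ref{t2}. The one step that calls for care rather than pure bookkeeping is the geometric estimate for $tr(\alpha\circ T)$: one must express this normal-vector-valued trace through the Weingarten operators $S_{e_k}$ and control the $m-n$ contributions uniformly in order to recover the factor $\sqrt{m-n}\,S_0T_*$; the rest is a routine, if somewhat lengthy, term-by-term application of Cauchy--Schwarz.
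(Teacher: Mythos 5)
Your proposal follows essentially the same route as the paper's own proof: establish $\|u_i\|_{L^2(\Omega)}=1$ and $\|\mathcal{L}u_i\|_{L^2(\Omega)}=\lambda_i^{1/2}$, use the pointwise bounds $|tr(\nabla T)|\le T_0$, $|I|\le I_0$, $\|tr(\alpha\circ T)\|\le\sqrt{m-n}\,S_0T_*$ via the Weingarten operators, estimate each term of $A_i$ and $B_i$ by Cauchy--Schwarz to get $A_i\le C_i$, $B_i\le D_i$, and substitute into Theorem \ref{t1} using $\lambda_{k+1}-\lambda_i\ge0$. (Note that a careful Cauchy--Schwarz actually yields $\lambda_i^{1/2}$ and $\|T(\nabla u_i)\|_{L^2(\Omega)}^2$ in the places where the stated $C_i$, $D_i$ carry $\lambda_i$ and $\|T(\nabla u_i)\|_{L^2(\Omega)}$, but these discrepancies are present in the paper's own computation as well, so your method coincides with the paper's.)
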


\begin{theorem}\label{t3}
Let $\Omega$ be a domain in an $n$-dimensional complete Riemannian manifold  $(M, \langle,\rangle)$ isometrically  immersed in $\mathbb{R}^{m}$ with mean curvature $H$ and  $\lambda_{i}$ be  the $i$th eigenvalue of  biharmonic operator, that is
\begin {equation*}
\begin{cases}
\Delta^{2} u_{i}=\lambda_{i} u_{i},& \text{in}\,\,\, \Omega\\
u_{i}=\frac{\partial u_{i}}{\partial \nu}=0,&\text{on}\,\,\,\partial \Omega\\
\int_{\Omega}u_{i}u_{j}dm=\delta_{ij}&\forall i,j =1,2,....
\end{cases}
\end{equation*}
Then for any positive constant $\delta$ and any positive integer $k$, we have
\begin{eqnarray}\nonumber
n\sum_{i=1}^{k}(\lambda_{k+1}-\lambda_{i})^{2}&\leq& \delta \sum_{i=1}^{k}(\lambda_{k+1}-\lambda_{i})^{2}(2nH_{0}I_{0}\lambda_{i}^{\frac{1}{2}}+n^{2}H_{0}^{2}+4\lambda_{i}^{\frac{1}{2}})\\\label{d1}&&
+\frac{1}{\delta}
\sum_{i=1}^{k}(\lambda_{k+1}-\lambda_{i})(\lambda_{i}^{\frac{1}{2}}+\frac{1}{4}n^{2}H_{0}^{2})
\end{eqnarray}
where $H_{0}=\sup_{\bar{\Omega}}|H|$ and $I_{0}=\sup_{\bar{\Omega}}|I|$.
\end{theorem}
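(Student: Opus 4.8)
The plan is to derive Theorem \ref{t3} directly from the sharp inequality (\ref{4}) of Theorem \ref{t1} by specializing to the case where $T$ is the identity $(1,1)$-tensor. (It will not do to simply put $T = \mathrm{Id}$ in Theorem \ref{t2}: that bound retains a factor $\|T(\nabla\mathcal{L}u_{i})\|_{L^{2}(\Omega)}$ which the clamped conditions do not control, so one has to go back to the unprocessed quantities $A_{i}, B_{i}$.) For $T = \mathrm{Id}$ one has $\mathcal{L}u = \mathrm{div}(\nabla u) = \Delta u$, so $\mathcal{L}^{2} = \Delta^{2}$ is exactly the clamped biharmonic operator of the statement, and the auxiliary tensorial data collapse: $\mathrm{tr}(T) = n$, $\nabla T \equiv 0$ (hence $\mathrm{tr}(\nabla T) = 0$ and $T_{0} = 0$), and $\alpha\circ T = \alpha$, so $\mathrm{tr}(\alpha\circ T) = \mathrm{tr}(\alpha)$ is the unnormalized mean curvature vector, with $|\mathrm{tr}(\alpha)| = n|H| \le nH_{0}$ pointwise. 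Since $\int_{\Omega}u_{i}^{2}\,dm = 1$, the left-hand side of (\ref{4}) then becomes $n\sum_{i=1}^{k}(\lambda_{k+1}-\lambda_{i})^{2}$, i.e. the left-hand side of (\ref{d1}); it remains to bound $A_{i}$ and $B_{i}$ by $C_{i}$ and $D_{i}$ respectively.

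The two analytic facts I would use are that the eigenvalue equation together with the clamped conditions $u_{i} = \partial u_{i}/\partial\nu = 0$ on $\partial\Omega$ gives $\int_{\Omega}(\Delta u_{i})^{2}\,dm = \int_{\Omega}u_{i}\Delta^{2}u_{i}\,dm = \lambda_{i}$ (two integrations by parts, boundary terms vanishing), and hence $\int_{\Omega}|\nabla u_{i}|^{2}\,dm = -\int_{\Omega}u_{i}\Delta u_{i}\,dm \le \|u_{i}\|_{L^{2}(\Omega)}\,\|\Delta u_{i}\|_{L^{2}(\Omega)} = \lambda_{i}^{1/2}$. Feeding these into the zeroth- and second-order pieces of $A_{i}$, together with Cauchy--Schwarz and the pointwise bounds $|\mathrm{tr}(\alpha)| \le nH_{0}$ and $|I| \le I_{0}$, one gets $2\int u_{i}\mathcal{L}u_{i}\langle\mathrm{tr}(\alpha),I\rangle\,dm \le 2nH_{0}I_{0}\lambda_{i}^{1/2}$, $\int u_{i}^{2}|\mathrm{tr}(\alpha)|^{2}\,dm \le n^{2}H_{0}^{2}$, and $4\int|\nabla u_{i}|^{2}\,dm \le 4\lambda_{i}^{1/2}$ --- precisely the three terms making up $C_{i}$ in (\ref{d1}). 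The analogous treatment of $B_{i}$, whose $\langle\nabla u_{i},\mathrm{tr}(\nabla T)\rangle$ term drops out, gives $D_{i} \le \lambda_{i}^{1/2} + \tfrac{1}{4}n^{2}H_{0}^{2}$; substituting both bounds into (\ref{4}) yields (\ref{d1}).

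The step I expect to be the real obstacle is the two first-order terms of $A_{i}$ that see the position map $I$, namely $2\int u_{i}\langle T(\nabla\mathcal{L}u_{i}),I\rangle\,dm$ and $2\int\mathcal{L}u_{i}\langle T(\nabla u_{i}),I\rangle\,dm$. Estimating the first by Cauchy--Schwarz as it stands would leave an $\|\nabla\Delta u_{i}\|_{L^{2}(\Omega)}$ that is not controlled by the $\lambda_{i}$ (the boundary integral $\int_{\partial\Omega}\Delta u_{i}\,\partial_{\nu}\Delta u_{i}$ does not vanish for clamped data). So one must integrate by parts --- legitimately, since $u_{i}\mathcal{L}u_{i} = 0$ on $\partial\Omega$ --- to shift the derivative off $\nabla\Delta u_{i}$, invoking the standard identity $\mathrm{div}_{M}(I^{\top}) = n + \langle I,\mathrm{tr}(\alpha)\rangle$ for the tangential part $I^{\top}$ of the position vector field. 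After this the two first-order terms recombine, the $\langle I,\mathrm{tr}(\alpha)\rangle$ contribution cancels against part of the zeroth-order term, and what survives is of the already-present $\lambda_{i}^{1/2}$- and $n^{2}H_{0}^{2}$-types; the careful part is tracking signs and boundary terms so that the constants come out exactly as displayed in (\ref{d1}).
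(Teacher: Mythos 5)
Your overall strategy --- specialize Theorem \ref{t1} to $T=\mathrm{Id}$, use $\int_{\Omega}(\Delta u_{i})^{2}dm=\lambda_{i}$ and $\int_{\Omega}|\nabla u_{i}|^{2}dm\le\lambda_{i}^{1/2}$, and estimate the zeroth-order and gradient terms via Cauchy--Schwarz with $H_{0}$ and $I_{0}$ --- is exactly the paper's, and those estimates agree with the paper's. The problem is your last step, where you assert that after integrating the first-order terms by parts ``the constants come out exactly as displayed in (\ref{d1})''. Carrying out your own integration by parts (legitimate, since $u_{i}=0$ on $\partial\Omega$) with $\mathrm{div}(I^{\top})=n+\langle I,\mathrm{tr}\,\alpha\rangle=n+\langle I,nH\rangle$ gives $2\int_{\Omega}u_{i}\langle\nabla\Delta u_{i},I\rangle dm=-2\int_{\Omega}\Delta u_{i}\langle\nabla u_{i},I\rangle dm-2\int_{\Omega}u_{i}\Delta u_{i}\bigl(n+\langle I,nH\rangle\bigr)dm$, so the two first-order terms together with the zeroth-order term $2\int_{\Omega}u_{i}\Delta u_{i}\langle nH,I\rangle dm$ collapse to $-2n\int_{\Omega}u_{i}\Delta u_{i}\,dm=2n\int_{\Omega}|\nabla u_{i}|^{2}dm\le 2n\lambda_{i}^{1/2}$. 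Hence your route bounds the specialized $A_{i}$ (the paper's $E_{i}$) by $n^{2}H_{0}^{2}+(4+2n)\lambda_{i}^{1/2}$, not by the bracket $2nH_{0}I_{0}\lambda_{i}^{1/2}+n^{2}H_{0}^{2}+4\lambda_{i}^{1/2}$ appearing in (\ref{d1}); these are incomparable in general, and in the minimal case $H_{0}=0$ (precisely the case exploited in Corollary \ref{t4}) the surviving $2n\lambda_{i}^{1/2}$ has no counterpart in (\ref{d1}). So what your argument proves is a variant of the theorem with different constants, not the statement as written: there is a genuine gap between your computation and the claimed conclusion.

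For comparison, the paper obtains the displayed constants differently: it asserts the identity $\int_{\Omega}u_{i}\langle\nabla\Delta u_{i},I\rangle dm=-\int_{\Omega}\Delta u_{i}\langle\nabla u_{i},I\rangle dm$, i.e.\ integrates by parts with no $\mathrm{div}(I^{\top})$ contribution, so the two first-order terms cancel each other outright and the zeroth-order term survives and is estimated by $2nH_{0}I_{0}\lambda_{i}^{1/2}$. Your more careful computation shows that this identity omits exactly the term $-\int_{\Omega}u_{i}\Delta u_{i}\,\mathrm{div}(I^{\top})dm$ that you retain, so the mismatch in constants is not bookkeeping on your side but a defect in the paper's own argument at the very step you flagged as the obstacle. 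To have a correct, self-contained proof you should either state and prove the inequality with $(4+2n)\lambda_{i}^{1/2}$ in place of $2nH_{0}I_{0}\lambda_{i}^{1/2}+4\lambda_{i}^{1/2}$ (and propagate that change through Corollary \ref{t4}), or explain how (\ref{d1}) as written could be recovered --- which your computation indicates cannot be done by this method.
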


\begin{corollary}\label{t4}
Let $\Omega$ be a domain in an $n$-dimensional complete minimal Riemannian submanifold  $(M, \langle,\rangle)$ in $\mathbb{R}^{m}$ and  $\lambda_{i}$ be  the $i$th eigenvalue of  biharmonic operator.
Then for any  positive integer $k$, we have
\begin{eqnarray}\nonumber
\lambda_{k+1}&\leq&\frac{1}{2k}(2+\frac{1}{n^{2}})\sum_{i=1}^{k}\lambda_{i}\\\label{ab}&&+\left\{
\frac{1}{k^{2}}(1+\frac{8}{n^{2}})^{2}(\sum_{i=1}^{k}\lambda_{i})^{2}-\frac{1}{k}(1+\frac{16}{n^{2}})\sum_{i=1}^{k}\lambda_{i}^{2}
  \right\}^{\frac{1}{2}},
\end{eqnarray}
and
\begin{equation}\label{ab1}
\lambda_{2}\leq (1+\frac{17}{2n^{2}})\lambda_{1}.
\end{equation}
\end{corollary}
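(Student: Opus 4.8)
The plan is to obtain the Corollary from Theorem~\ref{t3} by first specializing to the minimal case and then turning the surviving estimate into a quadratic inequality for $\lambda_{k+1}$.

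\emph{Step 1 (minimal case).} If $M$ is minimal in $\mathbb{R}^{m}$ then its mean curvature vanishes identically, so $H_{0}=\sup_{\bar\Omega}|H|=0$ and every term on the right-hand side of \eqref{d1} carrying a factor of $H_{0}$ drops out. What remains, for every $\delta>0$ and every positive integer $k$, is
\begin{equation*}
n\sum_{i=1}^{k}(\lambda_{k+1}-\lambda_{i})^{2}\ \le\ 4\delta\sum_{i=1}^{k}(\lambda_{k+1}-\lambda_{i})^{2}\lambda_{i}^{1/2}+\frac{1}{\delta}\sum_{i=1}^{k}(\lambda_{k+1}-\lambda_{i})\lambda_{i}^{1/2}.
\end{equation*}
In particular the extrinsic constant $I_{0}$ has cancelled, which is why the final inequality \eqref{ab} contains no geometric quantity.

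\emph{Step 2 (eliminating the half-powers and optimizing in $\delta$).} Since $0\le\lambda_{i}\le\lambda_{k+1}$ for $1\le i\le k$, I would bound $\lambda_{i}^{1/2}\le\lambda_{k+1}^{1/2}$ in the first sum, and in the second write $(\lambda_{k+1}-\lambda_{i})\lambda_{i}^{1/2}=(\lambda_{k+1}-\lambda_{i})^{1/2}\cdot(\lambda_{k+1}-\lambda_{i})^{1/2}\lambda_{i}^{1/2}$ and apply the Cauchy--Schwarz inequality, using $\lambda_{i}\le\lambda_{k+1}$ once more where needed. The right-hand side then has the form $a\,\delta+b\,\delta^{-1}$, with $a$ and $b$ products of powers of $\lambda_{k+1}$ with the quantities $\sum_{i}(\lambda_{k+1}-\lambda_{i})^{2}$, $\sum_{i}(\lambda_{k+1}-\lambda_{i})\lambda_{i}$ and $\sum_{i}\lambda_{i}$; choosing $\delta=\sqrt{b/a}$, so that $a\delta+b\delta^{-1}=2\sqrt{ab}$, and squaring leaves a polynomial inequality in $\lambda_{k+1}$. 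After one or two further applications of $\lambda_{k+1}-\lambda_{i}\le\lambda_{k+1}$ and $\lambda_{i}\le\lambda_{k+1}$ it is brought down to degree two, i.e.\ to $a_{0}\lambda_{k+1}^{2}-b_{0}\lambda_{k+1}\sum_{i=1}^{k}\lambda_{i}+c_{0}\sum_{i=1}^{k}\lambda_{i}^{2}\le 0$, where $a_{0},b_{0},c_{0}$ are explicit rational functions of $k$, $n^{-2}$ and $n^{-4}$.

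\emph{Step 3 (solving the quadratic).} Because $\lambda_{k+1}\ge\frac{1}{k}\sum_{i=1}^{k}\lambda_{i}$, the relevant root of $a_{0}t^{2}-b_{0}t\sum_{i}\lambda_{i}+c_{0}\sum_{i}\lambda_{i}^{2}$ is the larger one, and the quadratic formula delivers exactly \eqref{ab}. Note that solvability of this quadratic inequality is equivalent to nonnegativity of its discriminant, which is precisely the assertion that the expression under the square root in \eqref{ab} is $\ge 0$; so that point needs no separate argument.

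\emph{Step 4 (the case $k=1$).} Setting $k=1$ in \eqref{ab} and using $\sum_{i=1}^{1}\lambda_{i}=\lambda_{1}$, $\sum_{i=1}^{1}\lambda_{i}^{2}=\lambda_{1}^{2}$ together with the identity $\bigl(1+\tfrac{8}{n^{2}}\bigr)^{2}-\bigl(1+\tfrac{16}{n^{2}}\bigr)=\tfrac{64}{n^{4}}$, the bracket becomes $\tfrac{64}{n^{4}}\lambda_{1}^{2}$, whose square root is $\tfrac{8}{n^{2}}\lambda_{1}$; hence \eqref{ab} reduces to $\lambda_{2}\le\bigl(1+\tfrac{1}{2n^{2}}\bigr)\lambda_{1}+\tfrac{8}{n^{2}}\lambda_{1}=\bigl(1+\tfrac{17}{2n^{2}}\bigr)\lambda_{1}$, which is \eqref{ab1}.

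I expect the main difficulty to lie in Steps~2 and~3: the constants must be tracked with care so that, after the choices of how $\lambda_{i}\le\lambda_{k+1}$ and Cauchy--Schwarz are applied and how $\delta$ is selected, the coefficient of $\sum_{i}\lambda_{i}$ in the linear term comes out as $\tfrac{1}{2k}\bigl(2+\tfrac{1}{n^{2}}\bigr)$ while the two coefficients under the radical in \eqref{ab} come out as $\tfrac{1}{k^{2}}\bigl(1+\tfrac{8}{n^{2}}\bigr)^{2}$ and $\tfrac{1}{k}\bigl(1+\tfrac{16}{n^{2}}\bigr)$ respectively; one must also verify that these manipulations genuinely land on a quadratic (and not a higher-degree) inequality, so that the quadratic formula of Step~3 applies.
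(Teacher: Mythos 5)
Your Steps 1, 3 and 4 match the paper (specialize to $H_{0}=0$, solve a quadratic in $\lambda_{k+1}$, put $k=1$), but Step 2 has a genuine gap, and it is exactly the step you flagged as the difficulty. The paper does not discard the half-powers by $\lambda_{i}^{1/2}\le\lambda_{k+1}^{1/2}$; it first optimizes $\delta$ exactly, obtaining $\frac{n}{4}\sum_{i}(\lambda_{k+1}-\lambda_{i})^{2}\le\{(\sum_{i}(\lambda_{k+1}-\lambda_{i})^{2}\lambda_{i}^{1/2})(\sum_{i}(\lambda_{k+1}-\lambda_{i})\lambda_{i}^{1/2})\}^{1/2}$, and then invokes the Chebyshev-type rearrangement inequality
$\bigl(\sum_{i}(\lambda_{k+1}-\lambda_{i})^{2}\lambda_{i}^{1/2}\bigr)\bigl(\sum_{i}(\lambda_{k+1}-\lambda_{i})\lambda_{i}^{1/2}\bigr)\le\bigl(\sum_{i}(\lambda_{k+1}-\lambda_{i})^{2}\bigr)\bigl(\sum_{i}(\lambda_{k+1}-\lambda_{i})\lambda_{i}\bigr)$,
which yields the clean quadratic inequality $\sum_{i}(\lambda_{k+1}-\lambda_{i})^{2}\le\frac{16}{n^{2}}\sum_{i}(\lambda_{k+1}-\lambda_{i})\lambda_{i}$, i.e. $k\lambda_{k+1}^{2}-(2+\frac{16}{n^{2}})\lambda_{k+1}\sum_{i}\lambda_{i}+(1+\frac{16}{n^{2}})\sum_{i}\lambda_{i}^{2}\le 0$. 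Your proposed substitutions ($\lambda_{i}^{1/2}\le\lambda_{k+1}^{1/2}$ in the first sum, Cauchy--Schwarz in the second, then $\lambda_{k+1}-\lambda_{i}\le\lambda_{k+1}$, $\lambda_{i}\le\lambda_{k+1}$) put $\lambda_{k+1}$ where the paper keeps $\lambda_{i}$ on the right-hand side: after optimizing $\delta$ you get at best $\sum_{i}(\lambda_{k+1}-\lambda_{i})^{2}\le\frac{16}{n^{2}}\lambda_{k+1}\sum_{i}(\lambda_{k+1}-\lambda_{i})$, whose quadratic form is $k(1-\frac{16}{n^{2}})\lambda_{k+1}^{2}-(2-\frac{16}{n^{2}})\lambda_{k+1}\sum_{i}\lambda_{i}+\sum_{i}\lambda_{i}^{2}\le 0$; for $n\le 4$ the leading coefficient is nonpositive and no upper bound on $\lambda_{k+1}$ follows, and for larger $n$ the constants are strictly worse than those in \eqref{ab}. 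So the missing ingredient is precisely the Chebyshev-type inequality (as in Cheng--Yang and Wang--Xia), not a crude monotonicity bound.

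One further caution about "tracking constants so as to land on the printed coefficients": the quadratic displayed above gives the linear term $\frac{1}{2k}(2+\frac{16}{n^{2}})\sum_{i}\lambda_{i}$, not the printed $\frac{1}{2k}(2+\frac{1}{n^{2}})\sum_{i}\lambda_{i}$ (the square-root term does come out as $\{\frac{1}{k^{2}}(1+\frac{8}{n^{2}})^{2}(\sum_{i}\lambda_{i})^{2}-\frac{1}{k}(1+\frac{16}{n^{2}})\sum_{i}\lambda_{i}^{2}\}^{1/2}$), and with $k=1$ the derivation gives $\lambda_{2}\le(1+\frac{16}{n^{2}})\lambda_{1}$ rather than $(1+\frac{17}{2n^{2}})\lambda_{1}$. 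Your Step 4 arithmetic is internally consistent with the statement as printed, but no correct version of Step 2 will reproduce those printed constants, since they appear to stem from a slip in the paper; aim instead for the quadratic $k\lambda_{k+1}^{2}-(2+\frac{16}{n^{2}})\lambda_{k+1}\sum_{i}\lambda_{i}+(1+\frac{16}{n^{2}})\sum_{i}\lambda_{i}^{2}\le 0$ and what follows from it.
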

\section{Preliminaries}
In this section, we describe the necessary tools about tensor  $T$ and  problem (\ref{2}) which enable us to prove our results. Throughout the paper, for any vector fields $X,Y$,  we  denote $\langle T(X), Y\rangle$ with $T(X,Y)$. For  any  $u,v\in C^{\infty}(\Omega)$, straightforward computation implies that
\begin{equation*}
\mathcal{L}(uv)=v\mathcal{L}u+u\mathcal{L}v+2T(\nabla u, \nabla v).
\end{equation*}
Let $d\mu$ be the volume form on the boundary induced by the outward normal vector field $\nu$ on $\partial \Omega$.  The divergence theorem for operator $\mathcal{L}$ as follows
\begin{equation*}
\int_{\Omega}\mathcal{L}u \,dm=\int_{\partial \Omega}T(\nabla u, \nu)d\mu,
\end{equation*}
then the integration by parts yields
\begin{equation*}
\int_{\Omega}v\mathcal{L}u \,dm=-\int_{\Omega}T(\nabla u, \nabla v)\,dm+\int_{\partial \Omega}vT(\nabla u, \nu)d\mu.
\end{equation*}
Hence, the operators  $\mathcal{L}$ and $\mathcal{L}^{2}$ are self-adjoint operator in the space of all function in $L^{2}(\Omega, dm)$ that vanish on $\partial \Omega$. Therefore   the eigenvalues of  problem (\ref{2}) are real and discrete.
\begin{proposition}
Let $\Omega$ be  a domain in a an $n$-dimensional complete Riemannian manifold $(M,\langle, \rangle)$,   $\lambda_{i}$ be  the $i$th eigenvalue of  (\ref{2}) and $u_{i}$  be the corresponding orthonormal real-valued eigenfunction. Then for any $h\in C^{4}(\Omega)\cup C^{3}(\partial \Omega)$ and any positive integer $k$, we have
\begin{eqnarray}\label{5}
\sum_{i=1}^{k}(\lambda_{k+1}-\lambda_{i})^{2}w_{i}&\leq&  \sum_{i=1}^{k}(\lambda_{k+1}-\lambda_{i})||p_{i}||^{2},\\\label{6}
\sum_{i=1}^{k}(\lambda_{k+1}-\lambda_{i})^{2}v_{i}&\leq&\delta\sum_{i=1}^{k}(\lambda_{k+1}-\lambda_{i})^{2}w_{i}\\\nonumber&&+\frac{1}{\delta}\sum_{i=1}^{k}(\lambda_{k+1}-\lambda_{i})||T(\nabla h, \nabla u_{i})+\frac{u_{i}\mathcal{L}h}{2}||^{2},\\\label{7}
\sum_{i=1}^{k}(\lambda_{k+1}-\lambda_{i})^{2}v_{i}&\leq&\delta\sum_{i=1}^{k}(\lambda_{k+1}-\lambda_{i})||p_{i}||^{2}\\\nonumber&&+\frac{1}{\delta}\sum_{i=1}^{k}(\lambda_{k+1}-\lambda_{i})||T(\nabla h, \nabla u_{i})+\frac{u_{i}\mathcal{L}h}{2}||^{2},
\end{eqnarray}
where $\delta$ is any positive constant,
\begin{eqnarray*}
w_{i}&=&\int_{\Omega}hu_{i}p_{i}\,dm,\\
p_{i}&=&\mathcal{L}h\mathcal{L}u_{i}+2T(\nabla h, \nabla \mathcal{L}u_{i})+\mathcal{L}(u_{i}\mathcal{L}h)+2\mathcal{L}T(\nabla h, \nabla u_{i}),\\
v_{i}&=&\int_{\Omega}u_{i}^{2}T(\nabla h, \nabla h)\,dm.
\end{eqnarray*}
\end{proposition}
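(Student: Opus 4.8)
The plan is to adapt the Payne--P\'olya--Weinberger / Hile--Yeh / Yang ``commutator'' method to the fourth order operator $\mathcal{L}^{2}$. Fix a sufficiently smooth $h$ on $\bar{\Omega}$ and an integer $k$, and for $1\le i\le k$ put $a_{ij}=\int_{\Omega}hu_{i}u_{j}\,dm$ ($1\le j\le k$) and $\phi_{i}=hu_{i}-\sum_{j=1}^{k}a_{ij}u_{j}$. Then $a_{ij}=a_{ji}$, and $\int_{\Omega}\phi_{i}u_{j}\,dm=0$ for $j\le k$. Since $u_{i}=\partial u_{i}/\partial\nu=0$ on $\partial\Omega$ forces $\nabla u_{i}=0$ there, one checks that $\phi_{i}=\partial\phi_{i}/\partial\nu=0$ on $\partial\Omega$; hence each $\phi_{i}$ is admissible for the variational characterisation of $\lambda_{k+1}$ for problem (\ref{2}), so $\lambda_{k+1}\int_{\Omega}\phi_{i}^{2}\,dm\le\int_{\Omega}\phi_{i}\mathcal{L}^{2}\phi_{i}\,dm$.

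Next I would compute the commutator. Applying the product rule $\mathcal{L}(uv)=v\mathcal{L}u+u\mathcal{L}v+2T(\nabla u,\nabla v)$ twice and using $\mathcal{L}^{2}u_{i}=\lambda_{i}u_{i}$ gives $\mathcal{L}^{2}(hu_{i})=\lambda_{i}hu_{i}+p_{i}$ with $p_{i}$ exactly the expression in the statement. Pairing with $u_{j}$ and using the self-adjointness of $\mathcal{L}^{2}$ established in the Preliminaries yields $b_{ij}:=\int_{\Omega}p_{i}u_{j}\,dm=(\lambda_{j}-\lambda_{i})a_{ij}$, so $b_{ij}=-b_{ji}$. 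Set $P_{i}:=\mathcal{L}^{2}\phi_{i}-\lambda_{i}\phi_{i}=p_{i}-\sum_{j}b_{ij}u_{j}$; then $\int_{\Omega}P_{i}u_{j}\,dm=0$ for $j\le k$ and $\|P_{i}\|^{2}=\|p_{i}\|^{2}-\sum_{j}b_{ij}^{2}$. The variational inequality now gives $(\lambda_{k+1}-\lambda_{i})\int_{\Omega}\phi_{i}^{2}\,dm\le\int_{\Omega}\phi_{i}(\mathcal{L}^{2}\phi_{i}-\lambda_{i}\phi_{i})\,dm=\int_{\Omega}\phi_{i}p_{i}\,dm$, and Cauchy--Schwarz improves this to $(\lambda_{k+1}-\lambda_{i})^{2}\int_{\Omega}\phi_{i}^{2}\,dm\le\|P_{i}\|^{2}$ and $\int_{\Omega}\phi_{i}p_{i}\,dm\le\|P_{i}\|^{2}/(\lambda_{k+1}-\lambda_{i})$. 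For (\ref{5}) I would then use $w_{i}=\int_{\Omega}hu_{i}p_{i}\,dm=\int_{\Omega}\phi_{i}p_{i}\,dm+\sum_{j}a_{ij}b_{ij}$: multiplying by $(\lambda_{k+1}-\lambda_{i})\ge0$, summing over $i$, and substituting $b_{ij}=(\lambda_{j}-\lambda_{i})a_{ij}$ and $\|P_{i}\|^{2}=\|p_{i}\|^{2}-\sum_{j}b_{ij}^{2}$, all the residual double sums collapse into $\sum_{i,j}(\lambda_{k+1}-\lambda_{i})(\lambda_{k+1}-\lambda_{j})(\lambda_{j}-\lambda_{i})a_{ij}^{2}$, whose coefficient is antisymmetric under $i\leftrightarrow j$ while $a_{ij}^{2}$ is symmetric, so it vanishes and leaves exactly (\ref{5}).

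For (\ref{6}), the starting identity is $v_{i}=\int_{\Omega}u_{i}^{2}T(\nabla h,\nabla h)\,dm=-2\int_{\Omega}hu_{i}\,S_{i}\,dm$, where $S_{i}:=T(\nabla h,\nabla u_{i})+\frac{u_{i}\mathcal{L}h}{2}$; it follows from $T(\nabla h,\nabla h)=\frac12\mathcal{L}(h^{2})-h\mathcal{L}h$, an integration by parts, and $u_{i}|_{\partial\Omega}=0$. A further integration by parts shows $s_{ij}:=\int_{\Omega}S_{i}u_{j}\,dm$ is antisymmetric, $s_{ij}=-s_{ji}$. Decomposing $hu_{i}=\phi_{i}+\sum_{j}a_{ij}u_{j}$ gives $v_{i}=-2\int_{\Omega}\phi_{i}S_{i}\,dm-2\sum_{j}a_{ij}s_{ij}$. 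One then applies Cauchy--Schwarz to the first term with the free parameter $\delta$, arranging the powers of $(\lambda_{k+1}-\lambda_{i})$ so that one factor is controlled by $\sum_{i}(\lambda_{k+1}-\lambda_{i})^{3}\int_{\Omega}\phi_{i}^{2}\,dm\le\sum_{i}(\lambda_{k+1}-\lambda_{i})^{2}w_{i}$ (a consequence of the estimates of the previous paragraph, since the extra double sum appearing there is nonnegative after symmetrisation) and the other by $\sum_{i}(\lambda_{k+1}-\lambda_{i})\|S_{i}\|^{2}$, and disposes of the bilinear terms involving $a_{ij}s_{ij}$ by symmetrising in $i,j$, using $a_{ij}=a_{ji}$, $s_{ij}=-s_{ji}$, and the integration-by-parts relations linking $s_{ij}$, $b_{ij}$ and $a_{ij}$ coming from the formula for $p_{i}$. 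This yields (\ref{6}); then (\ref{7}) is immediate, by inserting (\ref{5}) into the first term on the right-hand side of (\ref{6}).

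The step I expect to be the main obstacle is precisely the handling of these two bilinear ``commutator'' sums. In (\ref{5}) the cancellation is clean once everything is written through $a_{ij}$ and the eigenvalue gaps, but in (\ref{6}) the cross term $\sum_{i,j}(\lambda_{k+1}-\lambda_{i})^{2}a_{ij}s_{ij}$ is not manifestly signed and must be reorganised, combining the antisymmetry of $b_{ij}$ and $s_{ij}$ with the additional identities relating them, and absorbed into the $\delta$-terms, all while keeping the weights $(\lambda_{k+1}-\lambda_{i})$ balanced so that the right-hand sides come out precisely as stated. A minor technical point is the case where $\lambda_{i}=\lambda_{k+1}$ for some $i\le k$, or where some of $\lambda_{1},\dots,\lambda_{k}$ coincide: the former terms drop out of the sums and the latter is dealt with by continuity, so neither affects the conclusion.
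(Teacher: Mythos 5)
Your treatment of (\ref{5}) is correct and is essentially the paper's argument, just repackaged through $P_{i}=\mathcal{L}^{2}\phi_{i}-\lambda_{i}\phi_{i}=p_{i}-\sum_{j}r_{ij}u_{j}$ with $r_{ij}=\int_{\Omega}p_{i}u_{j}\,dm=(\lambda_{j}-\lambda_{i})a_{ij}$; the cancellation of the double sum $\sum_{i,j}(\lambda_{k+1}-\lambda_{i})(\lambda_{k+1}-\lambda_{j})(\lambda_{j}-\lambda_{i})a_{ij}^{2}$ by antisymmetry is exactly right. The identity $v_{i}=-2\int_{\Omega}hu_{i}S_{i}\,dm$, the antisymmetry of $s_{ij}=\int_{\Omega}S_{i}u_{j}\,dm$, and the derivation of (\ref{7}) from (\ref{5}) and (\ref{6}) also match the paper.

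The genuine gap is in (\ref{6}), precisely at the point you yourself flag as ``the main obstacle'': the cross term $-2\sum_{i,j}(\lambda_{k+1}-\lambda_{i})^{2}a_{ij}s_{ij}$, which after symmetrisation equals $-2\sum_{i,j}(\lambda_{k+1}-\lambda_{i})(\lambda_{j}-\lambda_{i})a_{ij}s_{ij}$, is not signed and does not vanish, and the way you have set up the estimate destroys the only means of absorbing it. You propose to bound the Young-inequality terms by $\sum_{i}(\lambda_{k+1}-\lambda_{i})^{3}\|\phi_{i}\|^{2}\le\sum_{i}(\lambda_{k+1}-\lambda_{i})^{2}w_{i}$ (thereby discarding the nonpositive correction $-\sum_{i,j}(\lambda_{k+1}-\lambda_{i})(\lambda_{i}-\lambda_{j})^{2}a_{ij}^{2}$) and by $\sum_{i}(\lambda_{k+1}-\lambda_{i})\|S_{i}\|^{2}$ (thereby not exploiting the orthogonality refinement $\|S_{i}-\sum_{j}s_{ij}u_{j}\|^{2}=\|S_{i}\|^{2}-\sum_{j}s_{ij}^{2}$). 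But those two discarded quantities are exactly what the cross term must be paired with: the paper applies Young's inequality to $-2\int_{\Omega}\phi_{i}\bigl(S_{i}-\sum_{j}s_{ij}u_{j}\bigr)dm$ (legitimate since $\phi_{i}\perp u_{j}$), keeps the resulting $-\frac{1}{\delta}\sum_{j}s_{ij}^{2}$, keeps the $\delta\sum_{j}(\lambda_{i}-\lambda_{j})a_{ij}^{2}$ correction coming from (\ref{16}), and only after summing over $i$ combines the three double sums into
\begin{equation*}
-\sum_{i,j=1}^{k}(\lambda_{k+1}-\lambda_{i})\Bigl(\sqrt{\delta}\,(\lambda_{i}-\lambda_{j})a_{ij}-\tfrac{1}{\sqrt{\delta}}\,s_{ij}\Bigr)^{2}\le 0,
\end{equation*}
which is the step that removes the cross term. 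In your outline this completion of the square is replaced by an appeal to unspecified ``integration-by-parts relations linking $s_{ij}$, $b_{ij}$ and $a_{ij}$'' (no such extra relations are needed, and none would make the cross term vanish by symmetry alone), so as written the argument for (\ref{6}) does not go through; you must postpone dropping the two quadratic double sums until they can be merged with the cross term as above.
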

\begin{proof}
For each $i$, $1\leq i\leq k$, consider the functions $\phi_{i}:\Omega \to \mathbb{R}$ given by
\begin{equation}\label{8}
\phi_{i}=hu_{i}-\sum_{j=1}^{k}a_{ij}u_{j}
\end{equation}
where $a_{ij}=\int_{\Omega}hu_{i}u_{j}\,dm$. We have $\phi_{i}|_{\partial \Omega}=\frac{\partial \phi_{i}}{\partial \nu}|_{\partial \Omega}=0$ and
\begin{equation*}
\int_{\Omega}\phi_{i}u_{r} \,dm=\int_{\Omega}hu_{i}u_{r}\,dm-\sum_{j=1}^{k}a_{ij}\int_{ \Omega}u_{r}u_{j}d\mu=0,\,\,\,\,\,\,\,\forall i, r=1,2,...,k.
\end{equation*}
Then by the inequality of Rayleigh-Ritz, we get
\begin{equation}\label{9}
\lambda_{k+1}\leq\frac{\int_{\Omega}\phi_{i}\mathcal{L}^{2}\phi_{i}dm}{\int_{\Omega}\phi_{i}^{2}dm},\,\,\,\,\,\,\,\forall i=1,2,...,k.
\end{equation}
Since
\begin{equation*}
\mathcal{L}\phi_{i}=\mathcal{L}(hu_{i})-\sum_{j=1}^{k}a_{ij}\mathcal{L}u_{j}=h\mathcal{L}u_{i}+u_{i}\mathcal{L}h+2T(\nabla h,\nabla u_{i})-\sum_{j=1}^{k}a_{ij}\mathcal{L}u_{j},
\end{equation*}
we obtain
\begin{equation*}
\mathcal{L}^{2}\phi_{i}=\mathcal{L}h\mathcal{L}u_{i}+\lambda_{i}hu_{i}+2T(\nabla h,\nabla \mathcal{L}u_{i})+\mathcal{L}(u_{i}\mathcal{L}h)+2\mathcal{L}T(\nabla h,\nabla u_{i})-\sum_{j=1}^{k}a_{ij}\lambda_{j}u_{j},
\end{equation*}
therefore we get
\begin{equation}\label{10}
\int_{\Omega}\phi_{i}\mathcal{L}^{2}\phi_{i}dm=\lambda_{i}||\phi_{i}||^{2}+\int_{\Omega}hu_{i}p_{i}\,dm-\sum_{j=1}^{k}a_{ij}r_{ij},
\end{equation}
where $r_{ij}=\int_{\Omega}p_{i}u_{j}dm$ and
\begin{equation*}
p_{i}=\mathcal{L}h\mathcal{L}u_{i}+2T(\nabla h,\nabla \mathcal{L}u_{i})+\mathcal{L}(u_{i}\mathcal{L}h)+2\mathcal{L}T(\nabla h,\nabla u_{i}).
\end{equation*}
Using integration by parts, we deduce that
\begin{eqnarray}\label{11}
&&\int_{\Omega}u_{j}\mathcal{L}T(\nabla h,\nabla u_{i})dm+\int_{\Omega}u_{j}T(\nabla h,\nabla \mathcal{L}u_{i})dm\\\nonumber
&&\,\,\,=-\int_{\Omega}T(\nabla u_{j},\nabla T(\nabla h,\nabla u_{i}))dm-\int_{\Omega}div(u_{j}T\nabla h) \mathcal{L}u_{i}dm\\\nonumber
&&\,\,\,=\int_{\Omega}\mathcal{L} u_{j}T(\nabla h,\nabla u_{i})dm-\int_{\Omega}\mathcal{L} u_{i}T(\nabla h,\nabla u_{j})dm-\int_{\Omega}u_{j}\mathcal{L}h \mathcal{L} u_{i}dm.
\end{eqnarray}
On the other hand
\begin{eqnarray}
&&\int_{\Omega}\mathcal{L} u_{j}T(\nabla h,\nabla u_{i})dm-\int_{\Omega}\mathcal{L} u_{i}T(\nabla h,\nabla u_{j})dm\\\nonumber
&&\,\,\,=-\int_{\Omega}h\,div(\mathcal{L}u_{j}T\nabla u_{i})+\int_{\Omega}h\,div(\mathcal{L}u_{i}T\nabla u_{j})
\\\nonumber
&&\,\,\,=-\int_{\Omega}\langle h \nabla \mathcal{L} u_{j},T\nabla u_{i} \rangle dm+\int_{\Omega}\langle h \nabla \mathcal{L}u_{i},T\nabla u_{j} \rangle dm
\\\nonumber
&&\,\,\,=\int_{\Omega}u_{i}div(hT\nabla u_{j})dm-\int_{\Omega}u_{j}div(hT\nabla u_{i})dm
\\\nonumber
&&\,\,\,=\int_{\Omega}(u_{i}h \mathcal{L}^{2} u_{j}-u_{j}h \mathcal{L}^{2} u_{i} ) \,dm+\int_{\Omega}\left(\langle u_{i}T \nabla h,\nabla  \mathcal{L} u_{j} \rangle -\langle u_{j}T \nabla h,\nabla  \mathcal{L} u_{i} \rangle\right) \,dm
\\\nonumber
&&\,\,\,=(\lambda_{j}-\lambda_{i})a_{ij}-\int_{\Omega}\mathcal{L} u_{j}T(\nabla h,\nabla u_{i})\,dm+\int_{\Omega}\mathcal{L} u_{i}T(\nabla h,\nabla u_{j})\,dm\\\nonumber&&\,\,\,\,\,\,\,\,\,\,-
\int_{\Omega}u_{i}\mathcal{L} u_{j}\mathcal{L} h\,dm+\int_{\Omega}u_{j}\mathcal{L} u_{i}\mathcal{L} h\,dm,
\end{eqnarray}
which implies that
\begin{eqnarray}\label{12}
&&2\int_{\Omega}\mathcal{L} u_{j}T(\nabla h,\nabla u_{i})dm-2\int_{\Omega}\mathcal{L} u_{i}T(\nabla h,\nabla u_{j})dm\\\nonumber&&\,\,\,=(\lambda_{j}-\lambda_{i})a_{ij}-
\int_{\Omega}u_{i}\mathcal{L} u_{j}\mathcal{L} h\,dm+\int_{\Omega}u_{j}\mathcal{L} u_{i}\mathcal{L} h\,dm.
\end{eqnarray}
Substituting (\ref{11}) into (\ref{12}), we have
\begin{eqnarray}\label{13}
&&2\int_{\Omega}u_{j}\mathcal{L}T(\nabla h,\nabla u_{i})dm+2\int_{\Omega}u_{j}T(\nabla h,\nabla \mathcal{L}u_{i})dm\\\nonumber
&&\,\,\,=(\lambda_{j}-\lambda_{i})a_{ij}-
\int_{\Omega}u_{i}\mathcal{L} u_{j}\mathcal{L} h\,dm-\int_{\Omega}u_{j}\mathcal{L} u_{i}\mathcal{L} h\,dm.
\end{eqnarray}
Moreover
\begin{equation}\label{14}
\int_{\Omega}u_{j}\mathcal{L}(u_{i}\mathcal{L}h)\,dm=\int_{\Omega}u_{i}\mathcal{L} u_{j}\mathcal{L} h\,dm.
\end{equation}
Combining (\ref{13}), (\ref{14}) and  $r_{ij}=\int_{\Omega}p_{i}u_{j}dm$, we can write
\begin{equation}\label{15}
r_{ij}=(\lambda_{j}-\lambda_{i})a_{ij}.
\end{equation}
It follows from (\ref{9}), (\ref{10}) and (\ref{11}) that
\begin{eqnarray}\label{16}
(\lambda_{k+1}-\lambda_{i})||\phi_{i}||^{2}&\leq&\int_{\Omega}\phi_{i}\mathcal{L}^{2}\phi_{i}dm-\lambda_{i}||\phi_{i}||^{2}\\\nonumber
&\leq&\int_{\Omega}\phi_{i}p_{i}\,dm=w_{i}+\sum_{j=1}^{k}(\lambda_{i}-\lambda_{j})a_{ij}^{2},
\end{eqnarray}
where $w_{i}=\int_{\Omega}hu_{i}p_{i}\,dm$. We use that $\int_{\Omega}\phi_{i}u_{j}\,dm=0$ again to get
\begin{eqnarray}\nonumber
(\lambda_{k+1}-\lambda_{i})\left( \int_{\Omega}\phi_{i}p_{i}\,dm\right)^{2}&=&(\lambda_{k+1}-\lambda_{i}))\left( \int_{\Omega}\phi_{i}(p_{i}-\sum_{j=1}^{k}r_{ij}u_{j}\,dm\right)^{2}\\\label{17}
&\leq&(\lambda_{k+1}-\lambda_{i})||\phi_{i}||^{2}\left(||p_{i}||^{2}-\sum_{j=1}^{k}r_{ij}^{2} \right)
\\\nonumber
&\leq&\left( \int_{\Omega}\phi_{i}p_{i}\,dm\right)\left(||p_{i}||^{2}-\sum_{j=1}^{k}r_{ij}^{2} \right),
\end{eqnarray}
this implies that
\begin{equation}\label{18}
(\lambda_{k+1}-\lambda_{i})\left( \int_{\Omega}\phi_{i}p_{i}\,dm\right)\leq||p_{i}||^{2}-\sum_{j=1}^{k}r_{ij}^{2}.
\end{equation}
Multiplying (\ref{18}) by $(\lambda_{k+1}-\lambda_{i})$ and summing on $i$ from $1$ to $k$, we obtain
\begin{equation}\label{19}
\sum_{i=1}^{k}(\lambda_{k+1}-\lambda_{i})^{2} \int_{\Omega}\phi_{i}p_{i}\,dm\leq\sum_{i=1}^{k}(\lambda_{k+1}-\lambda_{i})||p_{i}||^{2}-\sum_{i,j=1}^{k}(\lambda_{k+1}-\lambda_{i})(\lambda_{i}-\lambda_{j})^{2}a_{ij}^{2}.
\end{equation}
Multiplying (\ref{16}) by $(\lambda_{k+1}-\lambda_{i})^{2}$, summing on $i$ from $1$ to $k$ and $a_{ij}=a_{ji}$, we infer
\begin{eqnarray}\nonumber
\sum_{i=1}^{k}(\lambda_{k+1}-\lambda_{i})^{2}\int_{\Omega}\phi_{i}p_{i}\,dm&=&\sum_{i=1}^{k}(\lambda_{k+1}-\lambda_{i})^{2}w_{i}+\sum_{i,j=1}^{k}(\lambda_{k+1}-\lambda_{i})^{2}(\lambda_{i}-\lambda_{j})a_{ij}^{2}\\
&=&\sum_{i=1}^{k}(\lambda_{k+1}-\lambda_{i})^{2}w_{i}-\sum_{i,j=1}^{k}(\lambda_{k+1}-\lambda_{i})(\lambda_{i}-\lambda_{j})^{2}a_{ij}^{2},
\end{eqnarray}
then
\begin{equation}\label{20}
\sum_{i=1}^{k}(\lambda_{k+1}-\lambda_{i})^{2}w_{i}\leq\sum_{i=1}^{k}(\lambda_{k+1}-\lambda_{i})||p_{i}||^{2},
\end{equation}
which shows that  (\ref{5}) is true. In order to prove (\ref{6}), we set
\begin{equation}\label{21}
b_{ij}=\int_{\Omega}u_{j}\left(T(\nabla h,\nabla u_{i})+\frac{u_{i}}{2}\mathcal{L}h\right)dm.
\end{equation}
Observe that
\begin{eqnarray*}
b_{ij}&=&-\int_{\Omega}u_{i}div(u_{j}T\nabla h)dm+\frac{1}{2}\int_{\Omega}u_{i}u_{j}\mathcal{L}hdm\\&=&
-\int_{\Omega}u_{i}\left(T(\nabla h,\nabla u_{j})+\frac{u_{j}}{2}\mathcal{L}h\right)dm=-b_{ji},
\end{eqnarray*}
and
\begin{equation}\label{22}
-2\int_{\Omega}\phi_{i}\left(T(\nabla h,\nabla u_{i})+\frac{u_{i}}{2}\mathcal{L}h\right)dm=v_{i}+2\sum_{j=1}^{k}a_{ij}b_{ij},
\end{equation}
where
\begin{equation*}
v_{i}=\int_{\Omega}\left(-hu_{i}^{2}\mathcal{L}{h}-2hu_{i}T(\nabla h,\nabla u_{i})\right)dm=\int_{\Omega}u_{i}^{2}T(\nabla h, \nabla h)\,dm.
\end{equation*}
Since for any positive constant  $\delta$ and for all  $x,y\in \mathbb{R}$ we have $-2xy\leq \delta x^{2}+\frac{y^{2}}{\delta}$, then multiplying (\ref{22}) by $(\lambda_{k+1}-\lambda_{i})^{2}$, for any positive constant  $\delta$, we get
\begin{eqnarray*}
&&(\lambda_{k+1}-\lambda_{i})^{2}(v_{i}+2\sum_{j=1}^{k}a_{ij}b_{ij} )\\\,\,&&=(\lambda_{k+1}-\lambda_{i})^{2}
\int_{\Omega}(-2\phi_{i})(T(\nabla h,\nabla u_{i})+\frac{u_{i}}{2}\mathcal{L}h-\sum_{j=1}^{k}b_{ij}u_{j})dm\\\,\,&&\leq
\delta(\lambda_{k+1}-\lambda_{i})^{3}||\phi_{i}||^{2}+\frac{1}{\delta}(\lambda_{k+1}-\lambda_{i})
\int_{\Omega}(T(\nabla h,\nabla u_{i})+\frac{u_{i}}{2}\mathcal{L}h-\sum_{j=1}^{k}b_{ij}u_{j})dm\\\,\,&&\leq
\delta(\lambda_{k+1}-\lambda_{i})^{3}||\phi_{i}||^{2}+\frac{1}{\delta}(\lambda_{k+1}-\lambda_{i})
(|| T(\nabla h,\nabla u_{i})+\frac{u_{i}}{2}\mathcal{L}h||^{2}-\sum_{j=1}^{k}b_{ij}^{2}),
\end{eqnarray*}
hence (\ref{16}) implies that
\begin{eqnarray*}
(\lambda_{k+1}-\lambda_{i})^{2}(v_{i}+2\sum_{j=1}^{k}a_{ij}b_{ij} )&\leq&
\delta(\lambda_{k+1}-\lambda_{i})^{2}(w_{i}+\sum_{j=1}^{k}(\lambda_{i}-\lambda_{j})a_{ij}^{2})\\\,\,&&+\frac{1}{\delta}(\lambda_{k+1}-\lambda_{i})
(|| T(\nabla h,\nabla u_{i})+\frac{u_{i}}{2}\mathcal{L}h||^{2}-\sum_{j=1}^{k}b_{ij}^{2}).
\end{eqnarray*}
Now,  summing over $i$ from $1$  to $k$, $a_{ij}=a_{ij}$ and $b_{ij}=-b_{ji}$ we conclude that
\begin{eqnarray*}
&&\sum_{i=1}^{k}(\lambda_{k+1}-\lambda_{i})^{2}v_{i}-2\sum_{i,j=1}^{k}(\lambda_{k+1}-\lambda_{i})(\lambda_{i}-\lambda_{j})a_{ij}b_{ij}\\ &&\leq\sum_{i=1}^{k}
\delta(\lambda_{k+1}-\lambda_{i})^{2}w_{i}-\sum_{i,j=1}^{k}\delta(\lambda_{k+1}-\lambda_{i})(\lambda_{i}-\lambda_{j})^{2}a_{ij}^{2}\\\,\,&&+\frac{1}{\delta}\sum_{i=1}^{k}(\lambda_{k+1}-\lambda_{i})|| T(\nabla h,\nabla u_{i})+\frac{u_{i}}{2}\mathcal{L}h||^{2}-
\frac{1}{\delta}\sum_{i,j=1}^{k}(\lambda_{k+1}-\lambda_{i})b_{ij}^{2},
\end{eqnarray*}
which gives
\begin{eqnarray}\nonumber
\sum_{i=1}^{k}(\lambda_{k+1}-\lambda_{i})^{2}v_{i}&\leq&\sum_{i=1}^{k}
\delta(\lambda_{k+1}-\lambda_{i})^{2}w_{i}+\frac{1}{\delta}\sum_{i=1}^{k}(\lambda_{k+1}-\lambda_{i})|| T(\nabla h,\nabla u_{i})+\frac{u_{i}}{2}\mathcal{L}h||^{2}\\ \label{23}&&-\sum_{i,j=1}^{k}(\lambda_{k+1}-\lambda_{i})(\sqrt{\delta}(\lambda_{i}-\lambda_{j})a_{ij}-\frac{1}{\delta}b_{ij})^{2}\\\nonumber&\leq&
\sum_{i=1}^{k}
\delta(\lambda_{k+1}-\lambda_{i})^{2}w_{i}+\frac{1}{\delta}\sum_{i=1}^{k}(\lambda_{k+1}-\lambda_{i})|| T(\nabla h,\nabla u_{i})+\frac{u_{i}}{2}\mathcal{L}h||^{2}.
\end{eqnarray}
Thus (\ref{6}) is true. Substituting (\ref{20}) into (\ref{23}) complete the  proof of the proposition.
\end{proof}
\begin{proof}[Proof of Theorem \ref{t1}]
Let $x_{1},..., x_{n}$ be the  standard Euclidean coordinate of $\mathbb{R}^{m}$, $\bar{\nabla}$ be the Canonical connection of $\mathbb{R}^{m}$ and  $\{e_{1},...,e_{m}\}$ be a local orthonormal geodesic frame in $p\in M$ adapted to $M$, then
\begin{equation*}
\bar{\nabla}x_{r}=\sum_{i=1}^{n}e_{i}(x_{r})e_{i}+\sum_{i=n+1}^{m}e_{i}(x_{r})e_{i},\qquad
e_{r}=\nabla x_{r}+(\nabla x_{r})^{\perp}.
\end{equation*}
Therefore

 \begin{equation*}
\sum_{r=1}^{m}T(\nabla x_{r}, \nabla u_{i})=\sum_{r=1}^{m}\langle \nabla x_{r}, T(\nabla u_{i})\rangle=
\sum_{r=1}^{m} \langle e_{r}-(\nabla x_{r})^{\perp},T(\nabla u_{i})\rangle=
\sum_{r=1}^{m} \langle e_{r},T(\nabla u_{i})\rangle,
\end{equation*}
and
 \begin{equation}\label{a1}
\sum_{r=1}^{m}T(\nabla x_{r}, \nabla u_{i})^{2}=
\sum_{r=1}^{m} \langle e_{r},T(\nabla u_{i})\rangle=|T(\nabla u_{i})|^{2}.
\end{equation}
Also, we have
 \begin{equation}\label{a2}
\sum_{r=1}^{m}T(\nabla x_{r}, \nabla x_{r})=
\sum_{r=1}^{m} \langle e_{r},T(\nabla  x_{r})\rangle=
\sum_{r=1}^{m} \langle T( e_{r}),\nabla  x_{r}\rangle=
\sum_{r=1}^{m} \langle T( e_{r}),e_{r}\rangle=tr(T).
\end{equation}
For $x=(x_{1},..., x_{n})$, we compute
\begin{eqnarray}\nonumber
div(T(\nabla x))&:=&\Big (div(T(\nabla x_{1})),...,div(T(\nabla x_{m}))\Big)\\\label{a6}&=&
\left(\sum_{i=1}^{n}e_{i}\langle T(\nabla  x_{1}),e_{i} \rangle,..., \sum_{i=1}^{n}e_{i}\langle T(\nabla  x_{m}),e_{i} \rangle\right)\\\nonumber&=&
\sum_{i,j=1}^{n}\Big(e_{i}(e_{j}(x_{1})\langle T(e_{j}),e_{i} \rangle),..., e_{i}(e_{j}(x_{m})\langle T(e_{j}),e_{i} \rangle)\Big)\\\nonumber&=&
\sum_{i,j=1}^{n}\Big(e_{i}e_{j}(x_{1})\langle T(e_{j}),e_{i} \rangle,..., e_{i}e_{j}(x_{m})\langle T(e_{j}),e_{i} \rangle\Big)\\\nonumber&&+
\sum_{i,j=1}^{n}\Big(e_{j}(x_{1})\langle\nabla_{ e_{i}} T(e_{j}),e_{i} \rangle,..., e_{j}(x_{m})\langle\nabla_{ e_{i}}  T(e_{j}),e_{i} \rangle\Big)\\\nonumber&=&
\sum_{i,j=1}^{n}\langle T(e_{j}),e_{i} \rangle\bar{\nabla}_{e_{i}}e_{j}(x)+\sum_{i,j=1}^{n}\langle \nabla_{e_{i}}T(e_{j}),e_{i} \rangle e_{j}(x)\\\nonumber&=&
\sum_{i,j=1}^{n}\langle T(e_{j}),e_{i} \rangle\alpha(e_{i},e_{j})(x)+\sum_{i,j=1}^{n}\langle \nabla_{e_{i}}T(e_{i}),e_{j} \rangle e_{j}(x)\\\nonumber&=&
\sum_{j=1}^{n}\alpha(T(e_{j}),e_{j})(x)+\sum_{j=1}^{n} \nabla_{e_{i}}T(e_{i}) (x)=tr(\alpha\circ T)(x)+tr(\nabla T)(x),
\end{eqnarray}
hence
 \begin{equation}\label{a3}
\sum_{r=1}^{m}(div(T(\nabla x_{r})))^{2}=||div T(\nabla x)||^{2}=||tr(\alpha\circ T)||^{2}+|tr(\nabla T)|^{2},
\end{equation}
and
\begin{eqnarray}\nonumber
\sum_{r=1}^{m}div(T(\nabla x_{r}))T(\nabla x_{r}, \nabla u_{i})&=&\sum_{r=1}^{m}div(T(\nabla x_{r}))T( \nabla u_{i})( x_{r})\\\label{a4}&=&\langle div(T(\nabla x)),T( \nabla u_{i}) \rangle=\langle tr(\nabla T),T( \nabla u_{i})\rangle,
\end{eqnarray}
where
$\alpha\circ  T=\alpha(T(.),.)\in \mathcal{X}(M)^{\perp}$. By taking $h=x_{r}$ in (\ref{6})  we can write
\begin{eqnarray}\nonumber
\sum_{i=1}^{k}(\lambda_{k+1}-\lambda_{i})^{2}v_{i}&\leq&\delta\sum_{i=1}^{k}(\lambda_{k+1}-\lambda_{i})^{2}w_{i}\\\label{a0}&&+\frac{1}{\delta}\sum_{i=1}^{k}(\lambda_{k+1}-\lambda_{i})||T(\nabla x_{r}, \nabla u_{i})+\frac{u_{i}\mathcal{L}x_{r}}{2}||^{2}.
\end{eqnarray}
where
\begin{eqnarray*}
w_{i}&=&\int_{\Omega}x_{r}u_{i}p_{i}\,dm,\\
p_{i}&=&\mathcal{L}x_{r}\mathcal{L}u_{i}+2T(\nabla x_{r}, \nabla \mathcal{L}u_{i})+\mathcal{L}(u_{i}\mathcal{L}h)+2\mathcal{L}T(\nabla x_{r}, \nabla u_{i}),\\
v_{i}&=&\int_{\Omega}u_{i}^{2}T(\nabla x_{r}, \nabla x_{r})\,dm.
\end{eqnarray*}
Summing over $r$, we have
\begin{eqnarray}\nonumber
\sum_{r=1}^{m}\sum_{i=1}^{k}(\lambda_{k+1}-\lambda_{i})^{2}v_{i}&\leq&\delta\sum_{r=1}^{m}\sum_{i=1}^{k}(\lambda_{k+1}-\lambda_{i})^{2}w_{i}\\\label{24}&&+\frac{1}{\delta}\sum_{i=1}^{k}(\lambda_{k+1}-\lambda_{i})\sum_{r=1}^{m}||T(\nabla x_{r}, \nabla u_{i})+\frac{u_{i}\mathcal{L}x_{r}}{2}||^{2},
\end{eqnarray}
and from (\ref{a2}) we get
\begin{equation}\label{a5}
\sum_{r=1}^{m} v_{i}=\sum_{r=1}^{m} \int_{\Omega}u_{i}^{2}T(\nabla x_{r}, \nabla x_{r})\,dm= \int_{\Omega}u_{i}^{2}\,tr(T)\,dm.
\end{equation}
Also (\ref{a3}) and   (\ref{a6})  imply that
\begin{eqnarray}\nonumber
\sum_{r=1}^{m}  \int_{\Omega}x_{r}u_{i}\mathcal{L} x_{r}\mathcal{L}u_{i}\,dm&=& \int_{\Omega}u_{i}\mathcal{L}u_{i}\,\langle divT(\nabla x),x \rangle\,dm\\\label{a7}&=& \int_{\Omega}u_{i}\mathcal{L}u_{i}\,\langle tr(\alpha\circ T)+tr(\nabla T),I \rangle\,dm,
\end{eqnarray}

\begin{eqnarray}
&&\sum_{r=1}^{m}  \int_{\Omega}x_{r}u_{i}\mathcal{L} (u_{i}\mathcal{L}x_{r})\,dm\\\nonumber&=&
\sum_{r=1}^{m}  \int_{\Omega}\mathcal{L} (x_{r}u_{i})u_{i}\mathcal{L}x_{r}\,dm\\\nonumber&=&
\sum_{r=1}^{m}  \int_{\Omega}\left( x_{r}u_{i}\mathcal{L} u_{i}\mathcal{L}x_{r}+u_{i}^{2}(\mathcal{L} x_{r})^{2}+2T(\nabla u_{i},\nabla x_{r}) u_{i}\mathcal{L} x_{r}\right)\,dm\\\nonumber&=&
\int_{\Omega}u_{i}\mathcal{L} u_{i}\,\langle tr(\alpha\circ T)+tr(\nabla T),I \rangle\,dm+\int_{\Omega}u_{i}^{2}(||tr(\alpha\circ T)||^{2}+|tr(\nabla T)|^{2})dm\\\nonumber&&+2\int_{\Omega}u_{i}\langle T(\nabla u_{i}),tr(\nabla T) \rangle\,dm,
\end{eqnarray}
and
\begin{eqnarray}
&&\sum_{r=1}^{m}  \int_{\Omega}2x_{r}u_{i}\mathcal{L}T (\nabla x_{r},\nabla u_{i})\,dm\\\nonumber&=&
2\sum_{r=1}^{m}  \int_{\Omega}\mathcal{L} (x_{r}u_{i})T (\nabla x_{r},\nabla u_{i})\,dm\\\nonumber&=&
2\sum_{r=1}^{m}  \int_{\Omega}u_{i}\mathcal{L}x_{r}T (\nabla x_{r},\nabla u_{i})\,dm+ 2\sum_{r=1}^{m}  \int_{\Omega}x_{r}\mathcal{L} u_{i}T (\nabla x_{r},\nabla u_{i})\,dm
\\\nonumber&&+2\sum_{r=1}^{m}  \int_{\Omega}T (\nabla x_{r},\nabla u_{i})^{2}\,dm\\\nonumber
&=&2\int_{\Omega}u_{i}\langle T(\nabla u_{i}),tr(\nabla T) \rangle\,dm+4\int_{\Omega}|T(\nabla u_{i})|^{2}\,dm+2\int_{\Omega}\mathcal{L}u_{i}\,
\langle T(\nabla u_{i}),I\rangle\,dm
\end{eqnarray}
\begin{equation}\label{a8}
\sum_{r=1}^{m} \int_{\Omega}2x_{r}u_{i}T(\nabla x_{r}, \nabla \mathcal{L} u_{i})\,dm= 2\int_{\Omega}u_{i}\langle T(\nabla \mathcal{L}u_{i}),I \rangle\,dm.
\end{equation}
Thus
\begin{eqnarray}\nonumber
\sum_{r=1}^{m}w_{i}&=&2 \int_{\Omega}u_{i}\mathcal{L}u_{i}\,\langle tr(\alpha\circ T)+tr(\nabla T),I \rangle\,dm+2 \int_{\Omega}u_{i}\langle T(\nabla \mathcal{L}u_{i}),I \rangle\,dm\\\label{a9}&&+\int_{\Omega}u_{i}^{2}(||tr(\alpha\circ T)||^{2}+|tr(\nabla T)|^{2})dm+4\int_{\Omega}u_{i}\langle T(\nabla u_{i}),tr(\nabla T) \rangle\,dm\\\nonumber&&+
4\int_{\Omega}|T(\nabla u_{i})|^{2}\,dm+2\int_{\Omega}\mathcal{L}u_{i}\,
\langle T(\nabla u_{i}),I\rangle\,dm
\end{eqnarray}
and
\begin{eqnarray}\label{a10}
&&\sum_{r=1}^{m}  ||T(\nabla x_{r}, \nabla u_{i})+\frac{u_{i}\mathcal{L}x_{r}}{2}||^{2}\\\nonumber&=&
\sum_{r=1}^{m}  \int_{\Omega}\left( T (\nabla x_{r},\nabla u_{i})^{2}+u_{i}\mathcal{L}x_{r} T (\nabla x_{r},\nabla u_{i})+\frac{1}{4} u_{i}^{2}(\mathcal{L} x_{r})^{2}
\right)dm
\\\nonumber&=& \int_{\Omega}
\left( |T(\nabla u_{i})|^{2}+u_{i}\langle T(\nabla u_{i}),tr(\nabla T) \rangle+\frac{1}{4} u_{i}^{2}(||tr(\alpha\circ T)||^{2}+|tr(\nabla T)|^{2})
\right)dm.
\end{eqnarray}
Substituting (\ref{a5}), (\ref{a9}) and (\ref{a10}) into (\ref{a0}) we complete the proof of the theorem.
\end{proof}


\begin{proof}[Proof of Theorem \ref{t2}]
Let $S_{e_{i}}$ be the Weingarten operator of the immersion with respect to $e_{i}$. Then
\begin{eqnarray*}
||tr(\alpha\circ T)||^{2}&=&||\sum_{i=1}^{n}\alpha(Te_{i},e_{i})||^{2}=||\sum_{i=1}^{n}\sum_{k=n+1}^{m}\langle\alpha(Te_{i},e_{i}),e_{k}\rangle e_{k}||^{2}\\
&=&||\sum_{k=n+1}^{m}(\sum_{i=1}^{n}\langle S_{e_{k}}e_{i},Te_{i}\rangle )e_{k}||^{2}=
||\sum_{k=n+1}^{m}\langle S_{e_{k}},T\rangle e_{k}||^{2}\\&\leq&\sum_{k=n+1}^{m}|\langle S_{e_{k}},T\rangle|^{2}\sum_{k=n+1}^{m}|e_{k}|^{2}\leq \sum_{k=n+1}^{m}|S_{e_{k}}|^{2}|T|^{2}\sum_{k=n+1}^{m}|e_{k}|^{2}\\&\leq&
(m-n)S_{0}^{2}T_{*}^{2},
\end{eqnarray*}
where $S_{0}=\max\{\sup_{\bar{\Omega}}|S_{e_{k}}|: \,\,k=n+1,...,m\}$ and $T_{*}=\sup_{\bar{\Omega}} |T|$. If $T_{0}=\sup_{\bar{\Omega}}|tr(\nabla T)|$ and $I_{0}=\sup_{\bar{\Omega}}|I|$  then
\begin{eqnarray}\nonumber
\int_{\Omega}u_{i}\langle T(\nabla u_{i}),tr(\nabla T) \rangle\,dm&\leq& \left(\int_{\Omega} u_{i}^{2}dm\right)^{\frac{1}{2}}
 \left(\int_{\Omega} |T(\nabla u_{i})|^{2}|tr(\nabla T)|^{2}dm\right)^{\frac{1}{2}}\\\label{b1}&\leq&T_{0}||T(\nabla u_{i})||_{L^{2}(\Omega)},
\end{eqnarray}

\begin{eqnarray}\nonumber
&&\int_{\Omega}u_{i}\mathcal{L}u_{i}\langle tr(\alpha\circ T)+tr(\nabla T), I\rangle dm\\\nonumber&&=
\int_{\Omega}u_{i}\mathcal{L}u_{i}\langle tr(\alpha\circ T), I\rangle dm+
\int_{\Omega}u_{i}\mathcal{L}u_{i}\langle tr(\nabla T), I\rangle dm\\\nonumber
&&\leq \left(\int_{\Omega} u_{i}^{2}dm\right)^{\frac{1}{2}} \left(\int_{\Omega}(\mathcal{L}u_{i})^{2} ||tr(\alpha\circ T)|^{2}|I|^{2}dm\right)^{\frac{1}{2}}\\\nonumber
&&\,\,\,+\left(\int_{\Omega} u_{i}^{2}dm\right)^{\frac{1}{2}} \left(\int_{\Omega}(\mathcal{L}u_{i})^{2} |tr(\nabla T)|^{2}|I|^{2}dm\right)^{\frac{1}{2}}\\\label{c1}
&&\leq(\sqrt{m-n} S_{0}T_{*}+T_{0})I_{0}\lambda_{i}^{\frac{1}{2}}
\end{eqnarray}
and
\begin{eqnarray}\nonumber
\int_{\Omega}u_{i}\langle T(\nabla \mathcal{L}u_{i}),I \rangle\,dm&\leq& \left(\int_{\Omega} u_{i}^{2}dm\right)^{\frac{1}{2}}
 \left(\int_{\Omega} |T(\nabla \mathcal{L}u_{i})|^{2}|I|^{2}dm\right)^{\frac{1}{2}}\\\label{c2}&\leq&I_{0}||T(\nabla\mathcal{L}u_{i})||_{L^{2}(\Omega)}.
\end{eqnarray}
Also, we have
\begin{eqnarray}\nonumber
\int_{\Omega}u_{i}^{2}(||tr(\alpha\circ T)||^{2}+|tr(\nabla T)|^{2})dm\leq (m-n)S_{0}^{2}T_{*}^{2}+T_{0}^{2}
\end{eqnarray}
and
\begin{eqnarray}\nonumber
\int_{\Omega}\mathcal{L}u_{i}\langle T(\nabla u_{i}),I\rangle dm&\leq&\left (\int_{\Omega}(\mathcal{L}u_{i})^{2}dm\right)^{\frac{1}{2}}\left(\int_{\Omega}|T(\nabla u_{i})|^{2}|I|^{2}dm\right)^{\frac{1}{2}}\\&\leq&\lambda_{i}
 I_{0}||T(\nabla u_{i})||_{L^{2}(\Omega)}.
\end{eqnarray}
By setting
\begin{eqnarray*}
C_{i}&=&2(\sqrt{m-n} S_{0}T_{*}+T_{0})I_{0}\lambda_{i}^{\frac{1}{2}}+I_{0}||T(\nabla\mathcal{L}u_{i})||_{L^{2}(\Omega)}+ (m-n)S_{0}^{2}T_{*}^{2}+T_{0}^{2}\\&&+4 T_{0}||T(\nabla u_{i})||_{L^{2}(\Omega)}+4 ||T(\nabla u_{i})||_{L^{2}(\Omega)}+2\lambda_{i}
 I_{0}||T(\nabla u_{i})||_{L^{2}(\Omega)},
\end{eqnarray*}
and
\begin{equation*}
D_{i}=||T(\nabla u_{i})||_{L^{2}(\Omega)}+T_{0}||T(\nabla u_{i})||_{L^{2}(\Omega)}+\frac{1}{4}((m-n)S_{0}^{2}T_{*}^{2}+T_{0}^{2})
\end{equation*}
we get $A_{i}\leq C_{i}$ and $B_{i}\leq D_{i}$. Substituting these  inequalities into Theorem \ref{t1}  we complete the proof of the Theorem.
\end{proof}

\begin{proof}[Proof of Theorem \ref{t3}]
 Taking  $T$ equal to identity in  Theorem $1$, we obtain
\begin{equation}\label{dd}
n\sum_{i=1}^{k}(\lambda_{k+1}-\lambda_{i})^{2}\leq \delta \sum_{i=1}^{k}(\lambda_{k+1}-\lambda_{i})^{2}E_{i}+\frac{1}{\delta}
\sum_{i=1}^{k}(\lambda_{k+1}-\lambda_{i})F_{i},
\end{equation}where
\begin{eqnarray*}
E_{i}&=&2\int_{\Omega}u_{i}\Delta u_{i}\langle nH, I\rangle dm+2\int_{\Omega}u_{i}\langle \nabla \Delta u_{i}, I\rangle dm\\&&+\int_{\Omega}u_{i}^{2}n^{2}||H||^{2}dm+4\int_{\Omega}|\nabla u_{i}|^{2}dm+2\int_{\Omega}\Delta u_{i}\langle \nabla u_{i},I\rangle dm,
\end{eqnarray*}
and
\begin{equation*}
F_{i}=\int_{\Omega}\left\{ |\nabla u_{i}|^{2}+\frac{u_{i}^{2}}{4}n^{2} ||H||^{2}\right\}dm.
\end{equation*}
On the other hand, we have
\begin{equation*}
\int_{\Omega}u_{i}\langle \nabla \Delta u_{i}, I\rangle dm=-\int_{\Omega}\Delta u_{i}\langle \nabla u_{i},I\rangle dm
\end{equation*}
and
\begin{eqnarray*}
\int_{\Omega}|\nabla u_{i}|^{2} dm&=&\int_{\Omega}\langle \nabla u_{i},\nabla u_{i}\rangle dm=-\int_{\Omega} u_{i}\Delta u_{i} dm\\&\leq&
\int_{\Omega} |u_{i}|  |\Delta u_{i}| dm\leq
(\int_{\Omega} u_{i}^{2} dm)^{\frac{1}{2}}(\int_{\Omega}(\Delta u_{i})^{2} dm)^{\frac{1}{2}}\leq
\lambda_{i}^{\frac{1}{2}}.
\end{eqnarray*}
Hence
\begin{equation*}
E_{i}\leq 2nH_{0}I_{0}\lambda_{i}^{\frac{1}{2}}+n^{2}H_{0}^{2}+4\lambda_{i}^{\frac{1}{2}}
\end{equation*}
and
\begin{equation*}
F_{i}\leq \lambda_{i}^{\frac{1}{2}}+\frac{1}{4}n^{2}H_{0}^{2}
\end{equation*}
where $H_{0}=\sup_{\bar{\Omega}}|H|$ and $I_{0}=\sup_{\bar{\Omega}}|I|$.
Substituting these inequality  into (\ref{dd}) we complete the proof of the theorem.
\end{proof}
\begin{proof}[Proof of Corollary \ref{t4}]
 For a minimal hypersurface we have $H=0$, therefore Theorem 3 results that
\begin{equation}
n\sum_{i=1}^{k}(\lambda_{k+1}-\lambda_{i})^{2}\leq 4\delta \sum_{i=1}^{k}(\lambda_{k+1}-\lambda_{i})^{2}\lambda_{i}^{\frac{1}{2}}
+\frac{1}{\delta}
\sum_{i=1}^{k}(\lambda_{k+1}-\lambda_{i})\lambda_{i}^{\frac{1}{2}}.
\end{equation}
Taking
\begin{equation*}
\delta=\left\{\frac{\sum_{i=1}^{k}(\lambda_{k+1}-\lambda_{i})\lambda_{i}^{\frac{1}{2}}}{4 \sum_{i=1}^{k}(\lambda_{k+1}-\lambda_{i})^{2}\lambda_{i}^{\frac{1}{2}}}\right\}^{\frac{1}{2}},
\end{equation*}
we get
\begin{equation}\label{cc}
\frac{n}{4}\sum_{i=1}^{k}(\lambda_{k+1}-\lambda_{i})^{2}\leq
\left\{{ \sum_{i=1}^{k}(\lambda_{k+1}-\lambda_{i})^{2}\lambda_{i}^{\frac{1}{2}}}{\sum_{i=1}^{k}(\lambda_{k+1}-\lambda_{i})\lambda_{i}^{\frac{1}{2}}}\right\}^{\frac{1}{2}}.
\end{equation}
On the other hand
\begin{equation*}
\left(\sum_{i=1}^{k}(\lambda_{k+1}-\lambda_{i})^{2}\lambda_{i}^{\frac{1}{2}} \right)
\left( \sum_{i=1}^{k}(\lambda_{k+1}-\lambda_{i})\lambda_{i}^{\frac{1}{2}}\right)\leq
\left(\sum_{i=1}^{k}(\lambda_{k+1}-\lambda_{i})^{2} \right)
\left( \sum_{i=1}^{k}(\lambda_{k+1}-\lambda_{i})\lambda_{i}\right).
\end{equation*}
It and (\ref{cc}) imply that
\begin{equation}\label{cc1}
\sum_{i=1}^{k}(\lambda_{k+1}-\lambda_{i})^{2}\leq\frac{16}{n^{2}} \sum_{i=1}^{k}(\lambda_{k+1}-\lambda_{i})\lambda_{i}
\end{equation}
solving this quadratic polynomial of $\lambda_{k+1}$, we obtain (\ref{ab}) and $(\ref{ab1})$.
\end{proof}


\begin{thebibliography}{99}
\bibitem{C} Q. M. Cheng, H. C. Yang, Inequalities for eigenvalues of a clamped plate problem, Trans. Amer. Soc. 358(2006) 2625-2635.

\bibitem{HY} G. N. Hile, R. Z. Yeh, Inequality for eigenvalues of  the biharmonic operator, Pacific J. Math. 112( 1984)115-133.
\bibitem{H} S. M. Hook, Domain independent upper bounds for  eigenvalues of elliptic operator, Trans. Amer. Math. Soc.  318(1990) 615-642.
\bibitem{P} L. E. Payne, G. P\'{o}lya, H. F. Weinberger, On the ratio of consecutive eigenvalues, J. Math. Phys. 35( 1956) 289-298.
\bibitem{W}Q. Wang, C. Xia, Universal bounds for eigenvalues of the biharmonic operator on Riemannian manifolds, Journal of functional analysis, 245 (2007) 334-352.
\end{thebibliography}
\end{document}